\title{Approximation of the number of nonunimodular zeros of a self-reciprocal polynomial}
\author{\firstname{Dragan} \lastname{Stankov}}
\address{Katedra Matematike RGF-a,
Faculty of Mining and Geology,
University of Belgrade,
Belgrade, \DJ u\v sina 7,
Serbia}
\thanks{The author received partial support from  
Ministry of Educations, Science and Technological Development of Republic of Serbia Project \#174032.}
\email[D. Stankov]{dstankov@rgf.bg.ac.rs}
\subjclass{11R06}
\keywords{Mahler measure, Boyd-Mossinghoff list, envelope}
\begin{abstract} 
 We introduce the ratio of the number of roots, not equal to 1 in modulus, of a reciprocal
 polynomial $R_d(x)$ to its degree $d$. For some sequences of reciprocal polynomials we show that the ratio has a
 limit $L$ when $d$ tends to infinity. Each of these sequences is defined using a two-variable polynomial $P(x,y)$ 
 so that $R_d(x) = P(x,x^n)$. 
 We present a few methods for approximation of the  limit ratio, some of them we originally developed. 
 In a previous paper we have calculated the exact value of the limit ratio of polynomials, correlated
 to many bivariate polynomials, of degree two in $y$, having small Mahler measure listed by Boyd and Mossinghoff. 
 Now we approximate the limit ratio of $P(y^n,y)$, for all polynomials in the list. Knowing some exact values of the limit ratio, we compare the error of the approximation and the time duration of these methods. 
 We show that the limit ratio of the sequence $P(x,x^n)$ usually is not equal to the limit ratio of
 the sequence $P(y^n,y)$, unlike the Mahler measure.
\end{abstract}
\begin{document}

\maketitle

\section{Introduction}

    
For a polynomial in one variable 
\begin{equation}\label{eq:Poly1}
P(x) = a_d \prod_{j=1}^{d}(x - \alpha_j)\in \mathbb{C}_d[x]
\end{equation}
($a_d \ne 0$), having zeros $\alpha_1,\alpha_2,\ldots,\alpha_d$, its 
Mahler measure is defined $$M(P(x)) = |a_d|\prod_{j=1}^{d}\max(1, |\alpha_j|).$$

The definition of the Mahler measure could be extended to polynomials in several
variables. We recall 
Jensen's formula, which states that
$\int_0^1 \log |P(e^{2\pi i\theta})| d\theta= \log |a_d| + \sum_{j=1}^d \log\max (|\alpha_j|, 1)$
Thus
$$M(P) = \exp \left\{ \int_0^1 \log |P(e^{2\pi i\theta})|d\theta\right\},$$
so $M(P)$ is just the geometric mean of $|P(z)|$ on the torus $T$.
Hence a natural
candidate for $M(P(x_1,x_2,\ldots,x_n)$ is
$$M(P) = \exp\left\{\int_0^1 \cdots\int_0^1\log |P(e^{2\pi i\theta_1},\ldots,e^{2\pi i\theta_n})|d\theta_1\cdots d\theta_n \right\}.$$
Various properties of Mahler measure have been investigated in \cite{Dub, LNS, BGMP, GL, BZ}.

The smallest known Mahler measure greater than 1 of a polynomial with integer coefficients is $M(x^{10} + x^9 - x^7 - x^6 - x^5 - x^4 - x^3 + x + 1) = 1.17628081 \ldots$.
This still stands as the smallest value of $M(P) > 1$, in spite of extensive computation
done since 1933 by many mathematicians.

The 
smallest known Mahler measures in 
two variables are
$$M((x + 1)y^2 + (x^2 + x + 1)y + x(x + 1)) = 1.25542\ldots$$ and
$$M(y^2 + (x^2 + x + 1)y + x^2) = 1.28573\ldots$$
The theorem of Boyd and Lawton (1981) claims that
\begin{equation}\label{BLF}
M(P (x, x^n))\rightarrow M(P(x,y))
\end{equation}
as $n\rightarrow \infty$.

Pritsker \cite{Pri} defined a natural areal analog of the Mahler measure and studied its properties. Flammang \cite{F1a1} introduced the absolute S-measure for the polynomial \eqref{eq:Poly1} defined by
$$s(P) :=\frac{1}{d}\sum_{j=1}^{d}|{\alpha}_j|,$$ as an analog to the Mahler measure and studied its properties. Our idea is neither to multiply nor to sum moduli of the zeros outside of the unit disc but only to count them. The problem of counting the 
zeros that a polynomial has on the unit circle is still an open problem (see \cite{BCFJ, BEFL, GV,Dru, Muk,VIEIRA2021113169, DAS2024106087, FV}). 

\noindent
Let $I(P)$ denote the number of complex
zeros of $P(x)$, which are 
$<1$ in modulus, counted with multiplicities. Such zeros 
are called internal. 

\noindent
Let $U(P)$ denote the number of zeros of $P(x)$, which are 
$=1$ in modulus,
(again, counting with multiplicities). Such zeros 
are called unimodular.

\noindent
Let $O(P)$ denote the number of complex
zeros of $P(x)$, which are 
$>1$ in modulus, counted with multiplicities. Such roots we call external roots.
Then clearly 

\noindent
$I(P)+U(P)+O(P)=d$, $I(P_1P_2)=I(P_1)+I(P_2) $, $U(P_1P_2)=U(P_1)+U(P_2) $, $O(P_1P_2)=O(P_1)+O(P_2) $.

\noindent
Pisot number can be defined as a real algebraic integer greater than 1 having the minimal polynomial $P(x)$ of degree $d$ such that 
$I(P)=d-1$.

\noindent
Salem number
is a real algebraic integer $> 1$ having the minimal polynomial $P(x)$ of degree $d$ such that 
$U(P)=d-2$, $I(P)=1$.

We say that a polynomial of degree $d$ is self-reciprocal or reciprocal, 
if $P(x) = x^dP(1/x)$.
Clearly,
if $\alpha_j$ is a root of $P(x)$, then $1/\alpha_j$ is also a root of $P(x)$ so that $I(P)=O(P)$.
The minimal polynomial of a Salem number is a self-reciprocal polynomial.

Let $C(P)=\frac{I(P)+O(P)}{d}$ be the ratio of the number of nonunimodular zeros of $P$ to its degree. Actually, it is the probability that a randomly chosen zero is not unimodular. 
Since the numbers of the internal and the external roots are equal, it follows that $C(P)=\frac{2O(P)}{d}$. 
In \cite{Sta1} we proved for a class of reciprocal, bivariate polynomials $P(x,y)$, having degree two in $y$, that the limit 
\begin{equation}\label{LC}
  \lim_{n\rightarrow \infty} C(P(x,x^n))
\end{equation}
exists. So it is reasonable to conjecture that the limit exists for any $P(x,y)$. In \cite{BM} Boyd and Mossinghoff, in their Chapter 6, have given some observations supporting the conjecture. It is not our aim here to prove the conjecture. For some polynomials we determine the explicit value of the limit. For many others we approximate the limit using various methods, some of them we have originally developed.    
We are going to prove here the following 

\begin{theorem} \label{MainTh} 
If 
\begin{equation}\label{P23inv}
  P^{\times}_{2,3}(x,y)= 1+y+xy+xy^2+xy^3+x^2y^3+x^2y^4
\end{equation}
then
  $$\lim_{n\to \infty}C(P^{\times}_{2,3}(x,x^n))=\frac{1}{\pi}\arccos\frac{ 3}{4}\approx 0.230053456. $$
\end{theorem}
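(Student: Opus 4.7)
The plan is to reduce the counting of unimodular roots of $P^{\times}_{2,3}(x,x^n)$ to a real trigonometric equation in the angular variables $(\theta,\phi)$ on the two-dimensional torus, to exploit the joint self-reciprocity $x^{2}y^{4}P^{\times}_{2,3}(1/x,1/y)=P^{\times}_{2,3}(x,y)$ in order to collapse the problem from degree $4$ in $y$ to a quadratic in an auxiliary variable, and then to invoke a winding/equidistribution argument analogous to the one in \cite{Sta1}.

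First I would substitute $x=e^{i\theta}$, $y=e^{i\phi}$ and factor out the central monomial $e^{i(\theta+2\phi)}$; by reciprocity, the remaining expression is real, so that $P^{\times}_{2,3}(e^{i\theta},e^{i\phi})=0$ is equivalent to
$$F(\theta,\phi):=1+2\cos\phi+2\cos(\theta+\phi)+2\cos(\theta+2\phi)=0.$$
Denote by $k(\theta)$ the number of solutions $\phi\in[0,2\pi)$ of $F(\theta,\phi)=0$; this is exactly the number of unimodular $y$-roots of $P^{\times}_{2,3}(e^{i\theta},y)$. Using $\cos\phi+\cos(\theta+\phi)=2\cos(\theta/2)\cos(\phi+\theta/2)$ and $\cos(\theta+2\phi)=2\cos^{2}(\phi+\theta/2)-1$, the equation reduces to the quadratic $4c^{2}+4\cos(\theta/2)\,c-1=0$ in $c=\cos(\phi+\theta/2)$, with roots
$$c_{\pm}(\theta)=\tfrac{1}{2}\Bigl(-\cos(\theta/2)\pm\sqrt{\cos^{2}(\theta/2)+1}\Bigr).$$

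Next I would determine when each $c_{\pm}$ lies in $[-1,1]$; an elementary calculation yields $c_{+}\in[-1,1]\Leftrightarrow\cos(\theta/2)\geq -3/4$ and $c_{-}\in[-1,1]\Leftrightarrow\cos(\theta/2)\leq 3/4$, and every such admissible $c$ corresponds to two values of $\phi\in[0,2\pi)$. Thus, off a set of measure zero, $k(\theta)=4$ when $|\cos(\theta/2)|<3/4$ and $k(\theta)=2$ when $|\cos(\theta/2)|>3/4$. Since the set $\{\theta\in[0,2\pi):|\cos(\theta/2)|>3/4\}$ has Lebesgue measure $4\arccos(3/4)$,
$$\int_{0}^{2\pi}k(\theta)\,d\theta=4\bigl(2\pi-4\arccos(3/4)\bigr)+2\cdot 4\arccos(3/4)=8\pi-8\arccos(3/4).$$

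Finally I would apply the winding-number asymptotic in the spirit of \cite{Sta1}: on each arc $E_{j}\subset[0,2\pi)$ on which a branch $y_{j}(e^{i\theta})=e^{i\psi_{j}(\theta)}$ is unimodular, the number of $\theta\in E_{j}$ with $n\theta\equiv\psi_{j}(\theta)\pmod{2\pi}$ is $n|E_{j}|/(2\pi)+o(n)$. Summing over the branches and dividing by $d=\deg P^{\times}_{2,3}(x,x^{n})=4n+2$,
$$\lim_{n\to\infty}\frac{U(P^{\times}_{2,3}(x,x^{n}))}{4n+2}=\frac{1}{8\pi}\int_{0}^{2\pi}k(\theta)\,d\theta=1-\frac{\arccos(3/4)}{\pi},$$
so $\lim_{n\to\infty}C(P^{\times}_{2,3}(x,x^{n}))=(1/\pi)\arccos(3/4)$, as claimed. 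The principal subtlety is the justification of this asymptotic near the endpoints of the arcs $E_{j}$, which occur at $\cos(\theta/2)=\pm 3/4$ where $c_{\pm}=\pm 1$ and two branches coalesce tangentially at the unit circle; but these are finitely many $\theta$, contributing only $O(1)$ error to $U$, absorbed in the $o(n)$ term.
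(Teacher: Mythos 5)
Your proposal is correct and reaches the right value, but it is organized differently from the paper's argument, so a comparison is worthwhile. Both proofs begin identically: dividing $P^{\times}_{2,3}(e^{i\theta},e^{i\phi})$ by the central monomial $e^{i(\theta+2\phi)}$ yields the real function $F(\theta,\phi)=1+2\cos\phi+2\cos(\theta+\phi)+2\cos(\theta+2\phi)$, which with $\phi=nt$ is exactly the paper's $f_1(t,n)$. From there the routes diverge. The paper treats $n$ as a continuous parameter and computes the \emph{envelope} of the family of curves $z=f_1(t,n)$ (solving $f=0$ and $\partial f/\partial n=0$ simultaneously), obtaining $E_{1,2}(t)=3\pm 4\cos(t/2)$; the zeros of the envelopes at $t=2\arccos(\pm 3/4)$ separate the arc on which all roots are unimodular from the arcs on which, by inspection of the oscillation pattern between the envelopes, exactly half are. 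You instead solve the equation explicitly as a quadratic in $c=\cos(\phi+\theta/2)$, read off the admissibility thresholds $\cos(\theta/2)=\pm 3/4$ from the condition $|c_\pm|\le 1$, and count unimodular roots by equidistributing $n\theta$ against the branch curves. The two computations are secretly the same --- your quadratic $4c^2+4\cos(\theta/2)c-1$ evaluated at $c=\pm 1$ is precisely $E_{1,2}(\theta)$, so a branch exits $[-1,1]$ exactly where an envelope vanishes --- but your version buys an explicit formula for the branches and replaces the paper's somewhat pictorial ``two unimodular roots followed by two nonunimodular roots'' step with a cleaner counting statement ($k(\theta)=4$ versus $k(\theta)=2$, integrated over the circle). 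It also connects directly to the discriminant criterion the paper invokes later: your transition $\cos(\theta/2)=\pm 3/4$ is $\cos\theta=1/8$, the real part of the unimodular roots $(1\pm 3i\sqrt{7})/8$ of $\textrm{disc}_y P^{\times}_{2,3}$. The one step shared with the paper at the same level of rigor is the final asymptotic count; your formulation via $n\theta\equiv\psi_j(\theta)\pmod{2\pi}$ with an $O(1)$ error controlled by the total variation of $\psi_j$ (finite despite the square-root coalescence of branches at the endpoints) is, if anything, the more defensible of the two.
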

If the limit in \eqref{LC} exists, it is useful to denote it with $LC(P)$.  


\section{The methods of the approximation}

\subsection{Definitions}

In \cite{BM} Boyd and Mossinghoff have introduced the following definition of certain families of polynomials, quadratic in $y$ used in Table \ref{tab:table2} at the end of this section.

\begin{definition}\label{def:PQRST}

%
    \begin{tabular}{l c l l}
      $P_{a,b}(x,y)$ &=&  $x^{\max(a-b,0)}\left(\sum_{j=0}^{a-1}x^j+\sum_{j=0}^{b-1}x^jy+x^{b-a}\sum_{j=0}^{a-1}x^jy^2\right)$ \\
      $Q_{a,b}(x,y)$  &=&  $x^{\max(a-b,0)}(1 + x^a + (1 + x^b)y + x^{b-a}(1 + x^a)y^2$ \\
      $R_{a,b}(x,y)$  &=&  $x^{\max(a-b,0)}(1 + x^a + (1 - x^b)y - x^{b-a}(1 + x^a)y^2$ \\
      $S_{a,b,\epsilon}(x,y)$  &=&  $1 + (x^a + \epsilon)(x^b + \epsilon)y + x^{a+b}y^2$, $\epsilon=\pm 1$ \\
      $T(x, y)$ &=& $yf(x) + f^*(x)$ \\ 
    \end{tabular}
\end{definition}


\begin{definition}\label{def:switch}
If we switch variables in a bivariate polynomial $P(x,y)$, we get a bivariate polynomial, which we call an inverted polynomial and denote $P^{\times}(x,y)$ so that 
$$P^{\times}(x,y):=P(y,x).$$ 
\end{definition}

For example, it follows from Definition \ref{def:PQRST}. that 
$P_{2,3}(x,y)=1+x+yx+yx^2+yx^3+y^2x^3+y^2x^4$ that is quadratic in $y$. 
Then $P^\times_{2,3}(x,y)=1+y+xy+xy^2+xy^3+x^2y^3+x^2y^4$ is quartic in $y$.
The next example explains the correlation between rows 2 and 2' in Table \ref{tab:table2}:
as $P_{2,1}(x,y)=1+x+yx+y^2x+y^2x^2$, it follows that 
$P^\times_{2,1}(x,y)=1+y+xy+x^2y+x^2y^2=P_{1,3}(x,y).$ 

\subsection{The method of Boyd and Mossinghoff (the BM method)}

In section 6 of \cite{BM}, Boyd and Mossinghoff introduced the following definitions: let $P(x,y)$ be of degree $g$ in $y$, let $\nu(x)$
denote the number of roots of $P(x, y) = 0$ with $|y(x)|>1$. Define $\delta=\delta(P):=(1/g)\int_0^1\nu(\exp(2\pi i t))dt$. Then the number of zeros of $P(x,x^n)$ outside the unit circle is asymptotically $\sim \delta g n$. They have calculated $\delta(P_{2,3}) = .06640475\ldots$, which is in accordance with our result in \cite{Sta1} $LC(P_{2,3})=0.1328095\ldots$ because $LC(P)=2\delta(P)$ is valid. 
In that way we get a formula for approximation of the limit ratio (BM method):
\begin{equation}\label{APP2}
LC(P)=\frac{2}{g}\int_0^1\nu(e^{2\pi i t})dt.  
\end{equation}

We present the Pari/GP code for calculating $LC(P^\times(k,m))$ using BM method: 

\begin{verbatim}
\p100
P1(k, m, s)=
{ v=vector(2*k-2+m);
  for(i = 1, k, v[2*k-1+m-i]=1);
  for(i = 1, k, v[i]=v[i]+exp(4*I*Pi*s));
  for(i = k, k+m-1, v[i]=v[i]+exp(2*I*Pi*s));  
  return(v);
}
Pr(k,m,s)=
{
r=polroots(Pol(P1(k,m,s)));
w=matsize(r)[1];
b=0;
for(i=1,w,if( abs(r[i]) >1.000000001,b=b+1);
);
return(b/w)
}
4*intnum(s=0,0.5,abs(Pr(2,3,s)),2)
\end{verbatim}

\subsection{The MBM method - our modification of the BM method}

We can notice that the integrand in \eqref{APP2}, $\nu(e^{2\pi i t})$, is a step function on $[0,1]$. Thus we introduce a modification of the BM method that significantly accelerates the calculation of the integral. We need to determine $t_0=0<t_1<\cdots<t_m=1$ such that the integrand $\nu$ is a constant function on $(t_{i-1},t_{i})$ and $t_1,t_2,\ldots,t_{m-1}$ are jump points, i.e., values of the integrand in $t_i-\epsilon$ and $t_i+\epsilon$ are different for all $\epsilon > 0$. To do this we start with a partition $I_j=[(j-1)/n,j/n], j=1,2,\ldots,n$. We investigate whether values of the integrand of the left and the right endpoint of $I_j$ are different. If they are, we use the bisection method to determine the jump point in $I_j$. Of course, $n$ should be sufficiently large so that there is at most one jump point in each $I_j$. Finally, the formula for approximation of the limit ratio (using the MBM method) is: 
\begin{equation}\label{APP3}
LC(P)=\frac{2}{g}\sum_{i=1}^m(t_{i}-t_{i-1})\nu\left(e^{2\pi i (t_{i-1}+t_{i})/2}\right).  
\end{equation}

\subsection{The CAP method - our method based on Cauchy’s argument principle}

The following theorem, known as Cauchy's argument principle, is useful for determination $C(P)$.
\begin{theorem}
If $f(x)$ is a meromorphic function inside and on some closed, simple contour $K$, and $f$ has no zeros or poles on $K$, then
$$\frac{1}{2\pi i}\oint_K \frac{f'(x)}{f(x)}dx=Z-\Pi\;, $$
where $Z$ and $\Pi$ denote, respectively, the number of zeros and poles of $f(x)$ inside the contour $K$, with each zero and pole counted as many times as its multiplicity and order, respectively, indicate.
\end{theorem}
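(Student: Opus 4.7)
The plan is to reduce the statement to a direct application of the residue theorem applied to the logarithmic derivative $f'/f$. First I would observe that, since $K$ is compact and $f$ is meromorphic on an open set containing $K$ and its interior, $f$ has only finitely many zeros and finitely many poles inside $K$; the hypothesis that neither zeros nor poles lie on $K$ is needed only to guarantee that the integrand $f'/f$ is defined and continuous along the contour, so the integral makes sense.

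Next I would identify the singularities of $f'/f$ and compute residues locally. If $a$ is a zero of $f$ of multiplicity $m$, then near $a$ we can write $f(x)=(x-a)^{m} g(x)$ with $g$ holomorphic and $g(a)\neq 0$, whence
$$\frac{f'(x)}{f(x)} = \frac{m}{x-a} + \frac{g'(x)}{g(x)},$$
and the second term is holomorphic at $a$, so $f'/f$ has a simple pole at $a$ with residue $m$. Similarly, if $b$ is a pole of $f$ of order $n$, then near $b$ we have $f(x)=(x-b)^{-n}h(x)$ with $h$ holomorphic and $h(b)\neq 0$, giving
$$\frac{f'(x)}{f(x)} = \frac{-n}{x-b} + \frac{h'(x)}{h(x)},$$
so $f'/f$ has a simple pole at $b$ with residue $-n$. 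At every other interior point $f'/f$ is holomorphic, because $f$ is holomorphic and nonvanishing there.

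Finally I would apply the residue theorem to the meromorphic function $f'/f$ on the region bounded by $K$: the contour integral equals $2\pi i$ times the sum of the residues at interior singularities, which is $\sum_{a}\operatorname{mult}(a)\;-\;\sum_{b}\operatorname{ord}(b) = Z - \Pi$, each zero and pole being counted with the multiplicity indicated. Dividing by $2\pi i$ yields the asserted formula. There is no substantial obstacle here beyond choosing the right factorization of $f$ near each singularity; the only subtlety worth flagging is the justification of the local factorizations $f(x)=(x-a)^{m}g(x)$ and $f(x)=(x-b)^{-n}h(x)$, which follow from the standard Laurent/Taylor expansions of meromorphic functions and are valid precisely because $K$ avoids the singularities.
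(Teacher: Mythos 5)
Your proof is correct and is the standard residue-theorem argument for Cauchy's argument principle: factor $f$ locally at each zero and pole, observe that $f'/f$ acquires a simple pole with residue equal to the multiplicity (respectively minus the order), and sum the residues. The paper does not actually prove this theorem --- it quotes it as a known classical result --- so there is no proof of record to compare against; your write-up supplies exactly the canonical argument, with the only implicit assumption being that $K$ is positively oriented.
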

If $f(x)$ is a polynomial $P(x)$ and the contour is a cycle concentric with the unit circle, then $\Pi=0$ and we can introduce the substitution $x=r\exp(2\pi i t)$ so that 
$$Z=\frac{1}{d} \int_0^1 \frac{P'(re^{2\pi i t})re^{2\pi i t}}{P(re^{2\pi i t})}dt .$$
Since $I(P)$ is a finite number, we can choose $r<1$, $r$ close to 1, such that all roots of $P$, less than 1 in modulus, are settled in the interior of the circle $\rho=re^{2\pi i t}$. 
If $P(x)$ is reciprocal, the number of zeros of $P(x)$ inside the unit circle is equal to the number of zeros of $P(x)$ outside the unit circle so that
\begin{equation}\label{CPappr}
C(P)=\frac{2}{d} \int_0^1 \frac{P'(re^{2\pi i t})re^{2\pi i t}}{P(re^{2\pi i t})}dt.
\end{equation}

Let $P(x,y)$ be written $$P(x,y)=a_{g}(x)y^{g}+a_{g-1}(x)y^{g-1}+ \cdots + a_0(x).$$ The degree $d$ of $P(x,x^n)$ is
\begin{equation}\label{eq:degree1}
d=\deg(a_{g}(x))+ng.
\end{equation}
Using \eqref{CPappr} there is $r<1$ such that
$$ C(P(x,x^n))=\frac{2}{d}\int_0^1 \frac{\frac{\partial}{\partial x}P(re^{2\pi i t},r^ne^{2\pi i nt})re^{2\pi i t}+n\frac{\partial}{\partial y}P(re^{2\pi i t},r^ne^{2\pi i nt})r^ne^{2\pi i nt}}{P(re^{2\pi i t},r^ne^{2\pi i nt})}dt.$$
We can split the previous integral into two integrals and replace $d$ using \eqref{eq:degree1}. The first integral
$$ \frac{2}{\deg(a_{g}(x))+ng} \int_0^1 \frac{\frac{\partial}{\partial x}P(re^{2\pi i t},r^ne^{2\pi i nt})re^{2\pi i t}}{P(re^{2\pi i t},r^ne^{2\pi i nt})}dt \to 0,\;\; n \to \infty.$$
In the second integral
$$\frac{2}{\deg(a_{g}(x))+ng} \int_0^1 \frac{n\frac{\partial}{\partial y}P(re^{2\pi i t},r^ne^{2\pi i nt})r^ne^{2\pi i nt}}{P(re^{2\pi i t},r^ne^{2\pi i nt})}dt $$ 
we can see that $$\frac{2n}{\deg(a_{g}(x))+ng}\to \frac{2}{g}$$ as $n$ tends to infinity.
The term $e^{2\pi i t}$ becomes increasingly uncorrelated with $e^{2\pi i nt}$ as $n \to \infty$, so that $nt$ can be replaced with a variable $s$ that vary independently to $t$:
$$ \lim_{n \to \infty} \frac{2}{g} \int_0^1 \frac{\frac{\partial}{\partial y}P(re^{2\pi i t},r^ne^{2\pi i nt})r^ne^{2\pi i nt}}{P(re^{2\pi i t},r^ne^{2\pi i nt})}dt \approx \lim_{n \to \infty} \frac{2}{g}  \int_0^1 \int_0^1 \frac{\frac{\partial}{\partial y}P(re^{2\pi i t},r^ne^{2\pi i s})r^ne^{2\pi i s}}{P(re^{2\pi i t},r^ne^{2\pi i s})}dt ds.$$
Finally, it is clear why we introduce the following formula (CAP method) 
\begin{equation}\label{eq:doubleint}
LC(P)\approx \lim_{
\scalebox{0.7}{
\begin{tabular}{l}
$r\to 1-0$  \\
$n\to \infty$\\
\end{tabular}}
}
\frac{2}{g} \int_0^1 \int_0^1 \frac{\frac{\partial}{\partial y}P(re^{2\pi i t},r^ne^{2\pi i s})r^ne^{2\pi i s}}{P(re^{2\pi i t},r^n e^{2\pi i s})}dt ds.
\end{equation}
when $P(x,x^n)$ has unimodular roots. 
Our calculations show that for $n=300$, $r=0.99999$ the double integral formula on the right of \eqref{eq:doubleint} gives the value close to the exact value (the error is
$<10^{-5}$). We should be careful not to lose the accuracy if we take in \eqref{eq:doubleint} larger values of $n$ and $r$ closer to 1.





We present formulae suitable for programming of the CAP-method for $P_{k,m}(x,y)$ and for $P^\times_{k,m}(x,y)$.
According to the Definition \ref{def:PQRST}. $P_{k,m}(x,y)$ is
$$P(k,m,x,y):=\sum_{j=0}^{k-1}x^j+y\sum_{j=k-1}^{k+m-2}x^j +y^2\sum_{j=k+m-2}^{2k+m-3}x^j.$$
The partial derivative of $P_{k,m}(x,y)$ with respect to $x$ is 
$$Px(k,m,x,y):=\sum_{j=1}^{k-1}jx^{j-1}+y\sum_{j=k-1}^{k+m-2}jx^{j-1} +y^2\sum_{j=k+m-2}^{2k+m-3}jx^{j-1}.$$
The partial derivative of $P_{k,m}(x,y)$ with respect to $y$ is 
$$Py(k,m,x,y):=0+1\sum_{j=k-1}^{k+m-2}x^j +2y\sum_{j=k+m-2}^{2k+m-3}x^j.$$

Then $LC(P_{k,m}(x,y))$ can be approximated using 
$$Cy(k,m,n,r):=\frac{2}{2}\int_{0}^{1}\int_{0}^{1}\frac{Py(k,m,re^{2i\pi t},r^n e^{2i\pi s})(r^n e^{2i\pi s})}{P(k,m,r e^{2i\pi t},r^n e^{2i\pi s})}dtds,$$
and $LC(P^\times_{k,m}(x,y))$ can be approximated using
$$Cx(k,m,n,r):=\frac{2}{2k+m-3}\int_{0}^{1}\int_{0}^{1}\frac{Px(k,m,r^ne^{2i\pi s},r e^{2i\pi t})(r^n e^{2i\pi s})}{P(k,m,r^n e^{2i\pi s},r e^{2i\pi t})}dtds,$$
where $r$ is close to 1 and $r<1$, $n\ge 200$.

\subsection{The table}

In Table \ref{tab:table2} we present limit points calculated in \cite{BM} of the Mahler measure of bivariate reciprocal polynomials $P(x,y)$ 
in ascending order. 
Then we present in Table \ref{tab:table2} the limit points $LC(P(x,y))$ of the ratio of the number of nonunimodular roots of the polynomial $P(x,x^n)$ to its degree when $n \rightarrow \infty$. For $P(x,y)$ that are quadratic in $y$, we use the exact values that we have determined in \cite{Sta1}. In the last column of Table \ref{tab:table2}, we complete the calculation by the limit points $LC(P^{\times}(x,y))$. As in \cite{BM} polynomials $P_{a,b}(x,y)$, $Q_{a,b}(x,y)$, $R_{a,b}(x,y)$, $S_{a,b,\epsilon}(x,y)$, defined in Definition \ref{def:PQRST}, are labeled as $P(a,b)$, $Q(a,b)$, $R(a,b)$, $S(a,b,\textrm{sgn}(\epsilon))$ respectively, in Table \ref{tab:table2}. Some polynomials are identified by the sequences; for example, the third smallest known limit point $(1 + x) + (1 - x^2 + x^4)y + (x^3 + x^4)y^2$, is identified by [++000, +0$-$0+, 000++], as in \cite{BM}. Polynomials in Table \ref{tab:table2} are written explicitly in Table D.2 of \cite{MS}. 

    \begin{longtable}{r l l l l}
    \caption{ Limit points of Mahler measure and limit points of the ratio of number of nonunimodular roots of a polynomial to its degree.\label{tab:table2}}\\
       & $\lim\limits_{n\rightarrow\infty}M(P(x,x^n))$ & $P$ & $\lim\limits_{n\rightarrow\infty}C(P(x,x^n))$ & $\lim\limits_{n\rightarrow\infty}C(P^\times(x,x^n))$  \\
      \hline
1. & 1.2554338662666087457 & P(2, 3)& 0.1328095098966884 & 0.230053456162615 \\ 
2. & 1.2857348642919862749 & P(2, 1)& 0.1608612465103325 & 0.333333333333333 \\ 
2'. & 1.2857348642919862749 & P(1, 3)& 0.3333333333333333 & 0.160861246510332 \\ 
3. & 1.3090983806523284595 & [++000, & 0.2970136797597501 & 0.097583122975771 \\ 
& & \multicolumn{2}{l}{+0$-$0+, 000++]} & \\
4. & 1.3156927029866410935 & P(3, 5)& 0.1646453474320021 & 0.261925575993535 \\ 
5. & 1.3247179572447460260 & T($1 + x - x^3$) & 0 & 0.132322561324637  \\
6. & 1.3253724973075860349 & P(3, 4)& 0.1739784246485862 & 0.288589141585482 \\
7. & 1.3320511054374193142 & P(2, 5)& 0.2634504964561481 & 0.272862064298000 \\
8. & 1.3323961294587154121 & S(1, 3,+)& 0.3814904582918582 & 0.124091876652015 \\ 
9. & 1.3381374319388410775 & P(3, 2)& 0.1871346248477649 & 0.345086459236550 \\
10. & 1.3399999217381835332 & P(4, 7)& 0.1784746137157699 & 0.273841185766290 \\ 
11. & 1.3405068829308471079 & P(3, 1)& 0.1895159205822178 & 0.368337855217854 \\ 
12. & 1.3497161046696958653 & T($1 + x^2 - x^7$) & 0 & 0.130905750935710 \\ 
13. & 1.3500148321630142650 & P(3, 7)& 0.2403097841316317 & 0.282722388246059 \\ 
14. & 1.3503169790598690950 & S(1, 4,$-$)& 0.3105668890134219 & 0.097319162141083 \\ 
15. & 1.3511458956697046903 & P(4, 5)& 0.1902698620670582 & 0.311233156483764 \\
16. & 1.3524680625188602961 & P(5, 9)& 0.1860703555283188 & 0.279768767163400 \\
17. & 1.3536976494626355711 & Q(1, 6)& 0.1893226580984896 & 0.079331472814232 \\
18. & 1.3567481051456008311 & P(4, 3)& 0.1964065801899085 & 0.347838496792791 \\
19. & 1.3567859884526454967 & P(5, 8)& 0.1908351326172760 & 0.293851334230770 \\
20. & 1.3581296324044179208 & [++00000, & 0.3755212901021780 & 0.107925225247525 \\ 
& & \multicolumn{2}{l}{+0$-$$-$$-$0+, 00000++]} & \\
21. & 1.3585455903960511404 & P(4, 1)& 0.1981783524823832 & 0.376084355688991 \\
22. & 1.3592080686995589268 & P(4, 9)& 0.2295536290347317 & 0.285604424375482 \\
23. & 1.3598117752819405021 & P(6, 11)& 0.1908185635976727 & 0.283185962099926 \\
24. & 1.3598158989877492950 & S(1, 6,+)& 0.3638326121576760 & 0.080362533690731 \\
25. & 1.3599141493821189216 & T($1 + x + x^8$) & 0 & 0.062172551844474 \\
26. & 1.3602208408592842371 & P(5, 7)& 0.1947758787175794 & 0.307985887166100 \\
27. & 1.3627242816569882815 & P(5, 6)& 0.1976969967166677 & 0.321914094334985 \\
28. & 1.3636514981864992177 & S(3, 5,+)& 0.3616163835316277 & 0.177841235500398 \\
29. & 1.3641995455827723418 & T($1 - x^2 + x^5$) & 0 & 0.178772346520853 \\
30. & 1.3644358117806362770 & [+000,00++, & 0.3504700257823537 & 0.169413093518251 \\
& & \multicolumn{2}{l}{++00, 000+]} & \\
31. & 1.3645459857899151366 & P(7, 13)& 0.1940425569464528 & 0.285345672159789 \\
32. & 1.3646557293930641449 & P(5, 11)& 0.2236027778291241 & 0.286902784448591 \\
33. & 1.3650623157174417179 & S(2, 7,$-$)& 0.3360946113639976 & 0.115525164633522 \\ 
34. & 1.3654687370557201592 & P(5, 4)& 0.2007692138817449 & 0.348374180979957 \\
35. & 1.3659850533667936783 & [++000,++0$-$0, & 0.2069305454044983 & 0.206930545404498 \\ 
& & \multicolumn{2}{l}{ 00000,0$-$0++, 000++]} & \\
36. & 1.3661459663116649518 & P(5, 3)& 0.2014521139875612 & 0.359293353026221 \\
37. & 1.3665709746056369455 & P(5, 2)& 0.2018615118309531 & 0.371006144584871 \\
38. & 1.3668078899273126149 & P(5, 1)& 0.2020844014923849 & 0.378452305048962 \\
39. & 1.3668830708592258921 & R(1, 5)& 0.1417550822341309 & 0.126211051843860 \\
40. & 1.3669909125179202255 & P(7, 12)& 0.1970232013102869 & 0.294801531511566 \\
41. & 1.3677988580117157740 & P(8, 15)& 0.1963614081210482 & 0.286799704864039 \\
42. & 1.3678546316653002345 & T($1 + x^4 + x^{11}$) & 0 & 0.172252351901681 \\ 
43. & 1.3681962517212729703 & P(6, 13)& 0.2199360577499605 & 0.287642585167356 \\
44. & 1.3682140096679950123 & P(1, 9)& 0.2082012946810569 & 0.066657322721448 \\
45. & 1.3683434385467330804 & [++00000, & 0.3045732337814742 & 0.213131613170404 \\
& & \multicolumn{2}{l}{++0$-$0++,00000++]} & \\
46. & 1.3687474425069274154 & P(6, 7)& 0.2014928273535877 & 0.327637984546821 \\
47. & 1.3689491694959833864 & P(7, 11)& 0.1994880038265199 & 0.304157343580054 \\
48. & 1.3697823199880122791 & S(1, 9,+)& 0.3622499773114010 & 0.059018757923146 \\
\end{longtable}
\section{The proof of the Theorem \ref{MainTh}}
Boyd and Mossinghoff \cite{BM} have shown that $P_{2.3}$ has the smallest known Mahler measure of irreducible dimension-2 polynomials. We showed in \cite{Sta1} that it has the smallest $LC$ among all bivariate reciprocal polynomials having degree two in $y$ in their table. We have determined the exact value $LC(P_{2.3})=1-\frac{2}{\pi}\arccos\left(\frac{\sqrt{2}}{2}-\frac{1}{2}\right)\approx 0.1328095\ldots$. 
We can show by changing the order of integration in a double integral that
switching of the variables in a bivariate polynomial does not affect to the Mahler
measure, i.e., $M(P(x, y)) = M(P^{\times}(x,y))$. It follows from the Boyd-Lawton formula \eqref{BLF} that $\lim_{n\to \infty}M(P(x,x^n))=\lim_{n\to \infty}M(P^{\times}(x,x^n)).$ 
Our task in the proof of the Theorem \ref{MainTh} is to determine exact value of $LC(P^{\times}_{2,3}(x,x^n)).$ We can notice that this value is not equal to $LC(P_{2,3}(x,x^n)).$ 
In the proof we will need the curve that touches each member of a given family of curves, so called the envelope of that family.

\subsection{Envelope}
\begin{Definition}
The family of curves 
\begin{equation}\label{famCur}
  f(x,y,\alpha)
\end{equation}
has an envelope
\begin{equation}\label{Env}
x=g(\alpha), y=h(\alpha)
\end{equation}
if, and only if, for each $\alpha = \alpha_0$ the point $(g(\alpha_0), h(\alpha_0))$ of the curve \eqref{Env} lies on the curve $f(x, y, \alpha_0) = 0$ and both curves have the same tangent line there.
\end{Definition}
We present Theorem 6. in Chapter 4. of \cite{widder1989advanced} along with its short proof.
\begin{theorem} If

  1. $f(x,y,\alpha)$, $g(\alpha)$, $h(\alpha)\in C^1$,
  
  2. $f_x'^2+f_y'^2\ne 0$
  
  3. $g'^2+h'^2\ne 0$,
  
  4. $f(g(\alpha),h(\alpha),\alpha)\equiv0$
  
  5. $f'_{\alpha}(g(\alpha),h(\alpha),\alpha)\equiv0$

then family \eqref{famCur} has curve \eqref{Env} as an envelope.
\end{theorem}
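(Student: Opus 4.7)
The plan is to verify the two clauses of the envelope definition separately: the incidence condition, saying that $(g(\alpha_0),h(\alpha_0))$ lies on the curve $f(x,y,\alpha_0)=0$, and the tangency condition, saying that the parametric curve \eqref{Env} and the level curve $f(x,y,\alpha_0)=0$ share a common tangent line at that point. The first is essentially trivial, while the second reduces to a single chain-rule identity that is tailor-made by hypotheses 4 and 5.

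First, fix any admissible parameter value $\alpha_0$. Hypothesis 4 evaluated at $\alpha=\alpha_0$ reads $f(g(\alpha_0),h(\alpha_0),\alpha_0)=0$, which is exactly the incidence condition. For tangency, I would begin by checking that the two tangent lines to be compared are genuinely defined. Hypothesis 3 guarantees that $(g'(\alpha_0),h'(\alpha_0))$ is a nonzero vector, hence a bona fide direction vector of the tangent line to \eqref{Env} at $\alpha=\alpha_0$. Hypothesis 1 gives $f\in C^1$, and hypothesis 2 says that the gradient $(f_x',f_y')$ does not vanish at the point in question; by the implicit function theorem the level set $f(x,y,\alpha_0)=0$ is therefore locally a smooth curve whose tangent line at $(g(\alpha_0),h(\alpha_0))$ is the line through that point orthogonal to $(f_x',f_y')$. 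The two tangent lines will coincide precisely when
$$f_x'(g(\alpha_0),h(\alpha_0),\alpha_0)\,g'(\alpha_0)+f_y'(g(\alpha_0),h(\alpha_0),\alpha_0)\,h'(\alpha_0)=0.$$

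To produce this orthogonality, I would differentiate the identity in hypothesis 4 with respect to $\alpha$, which is legitimate because all functions are $C^1$ by hypothesis 1. The chain rule gives
$$f_x'(g(\alpha),h(\alpha),\alpha)\,g'(\alpha)+f_y'(g(\alpha),h(\alpha),\alpha)\,h'(\alpha)+f_\alpha'(g(\alpha),h(\alpha),\alpha)\equiv 0,$$
and hypothesis 5 kills the last term, leaving exactly the displayed equation above for every $\alpha$, in particular for $\alpha=\alpha_0$. That completes both clauses of the envelope definition, so the proof is finished.

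There is no real obstacle: the statement is a classical consequence of the chain rule once the hypotheses are sorted into two roles. Hypotheses 1, 4, 5 supply the computational content, producing the orthogonality identity; hypotheses 2 and 3 are the regularity conditions which ensure that the two tangent lines we wish to compare actually exist as one-dimensional subspaces at the point of incidence. The only subtlety worth mentioning is that one should write the chain rule carefully to track which arguments of $f_x'$, $f_y'$, $f_\alpha'$ are being differentiated and which are held fixed, but this is routine and does not affect the structure of the argument.
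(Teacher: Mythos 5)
Your proof is correct and follows essentially the same route as the paper: the key step in both is differentiating the identity of hypothesis 4 with respect to $\alpha$ and using hypothesis 5 to obtain $f'_x g' + f'_y h' = 0$ at the point of incidence. The only difference is presentational: you phrase tangency as orthogonality of $(g',h')$ to the gradient $(f'_x,f'_y)$, which neatly avoids the paper's explicit case split between $f'_y\ne 0$ and the vertical-tangent case.
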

\begin{proof}
For each $\alpha$ the point $(g(\alpha), h(\alpha))$ lies on the curve \eqref{famCur} by hypothesis 4. For each $\alpha$ the slope of the curve \eqref{famCur} is
  \begin{equation}
    \begin{cases}
      \frac{dy}{dx}=-\frac{f'_x}{f'_y}, & \text{if}\ f'_y\ne 0, \\
      \infty, & \text{othervise.}
    \end{cases}
  \end{equation}
Differentiating 4. partially with respect to $\alpha$ and using 5. we have,
$$f'_xg' + f'_yh' + f'_{\alpha} = f'_xg' + f'_yh' = 0.$$
Hence,
  \begin{equation}\label{slope}
    \begin{cases}
-\frac{f'_x}{f'_y}=\frac{h'}{g'}, & \text{if}\ g'\ne 0, \\
      \infty, & \text{othervise.}
    \end{cases}
  \end{equation}
Since the right-hand side of \eqref{slope} is precisely the slope of the curve \eqref{famCur}, the proof is complete. It is clear that when $f'_y$ vanishes $f'_x$ does not and that then $g'$ must also vanish. Both slopes are then infinite.
\end{proof}

This theorem provides a simple method of determining the functions $g$ and $h$. We have only to solve the equations
\begin{equation}\label{eqEnv}
  f(x,y,\alpha)=0,\;\; f'_{\alpha}(x,y,\alpha)=0
\end{equation}
as simultaneous equations in $x$ and $y$. The equations \eqref{eqEnv} are called parametric equations of the envelope and eliminating of $\alpha$ between them produces the equation of envelope.

\begin{proof} \textbf{of Theorem \ref{MainTh}}
To find the unimodular roots of $P^\times_{2,3}(x,x^n) $ we have to substitute $ x=e^{it}$ $y=e^{int}$ into $P^\times_{2,3}(x,y)$. Then we have to solve $P^\times_{2,3}(e^{it},e^{int})=0 $. 
If we rewrite \eqref{P23inv} as $$P^{\times}_{2,3}(x,y)=xy^2\left( xy^2+xy+y+1+\frac{1}{y}+\frac{1}{xy}+\frac{1}{xy^2}\right)$$
we get the equation $f_1(t,n)=0$ where 
$$f_1(t,n)=2\cos((2n+1)t)+2\cos((n+1)t)+2\cos(nt)+1 .$$ Each unimodular root correspondents to an intersection point of $f_1(t,n)$ and $t$-axes. To estimate the number of the unimodular roots in different parts of $[0,2\pi]$, we need to determine envelopes of the family of curves $z=f_1(t,n)$. The envelopes are presented on Figure \ref{fig:1} by solid blue, green and brown curves.

\begin{figure}[t]
  \includegraphics[width=0.9\textwidth, height=0.5\textheight]{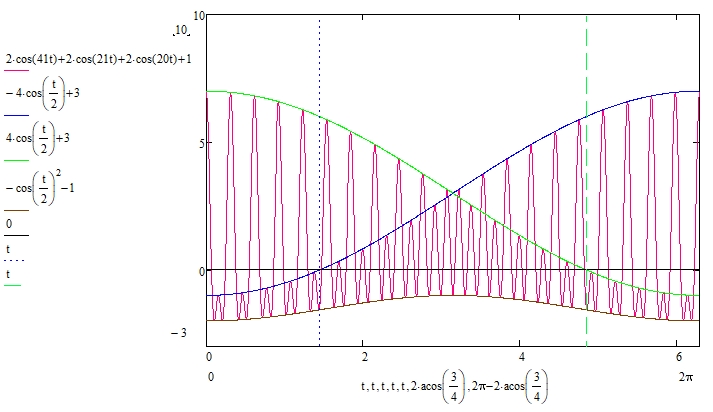}
\caption{Estimation of the number of nonunimodular roots in different subsegments of $[0,2\pi]$}
\label{fig:1}       
\end{figure}

The system of the equations \eqref{eqEnv} become here $f(t,z,n)=f_1(t,n)-z=0$ and $f'_n(t,z,n)=\frac{\partial}{\partial n}f_1(t,n)=0$ i.e.
\begin{equation}\label{eqEnv1}
z=2\cos((2n+1)t)+2\cos((n+1)t)+2\cos(nt)+1\;\;\;
\end{equation}
and 
\begin{equation}\label{eqEnv2}
0=-4t\sin((2n+1)t)-2t\sin((n+1)t)-2t\sin(nt)\;\;\;  
\end{equation}
If we divide the last equation with $t$ and apply to it the sum-to-product formula we get
$$-4\sin((2n+1)t)-4\sin\frac{(2n+1)t}{2}\cos \frac{t}{2}=0.  $$
When we apply 
sine of double-angle formula
$$-8\sin\frac{(2n+1)t}{2}\cos\frac{(2n+1)t}{2}-4\sin\frac{(2n+1)t}{2}\cos \frac{t}{2}=0,  $$
and
take out the common factor we get 
$$-4\sin\frac{(2n+1)t}{2}\left(2\cos\frac{(2n+1)t}{2}+\cos \frac{t}{2}\right)=0.$$

If $\sin\frac{(2n+1)t}{2}=0$ then 
\begin{equation}\label{c2np1h}
  \cos\frac{(2n+1)t}{2}=\pm 1
\end{equation}
and 
\begin{equation}\label{c2np1}
  \cos((2n+1)t)= 1
\end{equation}

To determine the envelope we need to eliminate $n$ in \eqref{eqEnv1}
$$z=2\cos((2n+1)t)+2\cos((n+1)t)+2\cos(nt)+1 .$$
Using \eqref{c2np1} we get
$$z=2\cdot 1+2\cos((n+1)t)+2\cos(nt)+1 .$$
Now we use sum of cosine to product formula and get 
$$z=3+4\cos\frac{(2n+1)t}{2}\cos\frac{t}{2} $$
Finally we get two envelopes substituting \eqref{c2np1h}
$$ E_1(t)= 3 + 4\cos\frac{t}{2},\;\; E_2(t)= 3 - 4\cos\frac{t}{2}.$$
Zeros of envelopes are
$$3 \pm 4\cos\frac{t}{2}=0\Rightarrow t=2\arccos\frac{\pm 3}{4}.$$

In $[0,2\arccos\frac{ 3}{4}]\cup[2\arccos\frac{- 3}{4},2\pi]$ we have two unimodular roots followed by two nonunimodular roots and vice versa (see Figure \ref{fig:1}. and Figure \ref{fig:2}) as its arguments increase.
In $[2\arccos\frac{3}{4},2\arccos\frac{-3}{4}]$ all roots are unimodular.
It follows that $C(P(x,x^n))\rightarrow \frac{1}{2}\frac{1}{2\pi}2\cdot 2\arccos\frac{ 3}{4}\approx 0.230053456. $
\end{proof}

\subsection{The graph and a crude approximation}

\begin{figure}[t]
  \includegraphics[width=0.9\textwidth, height=0.54\textheight]{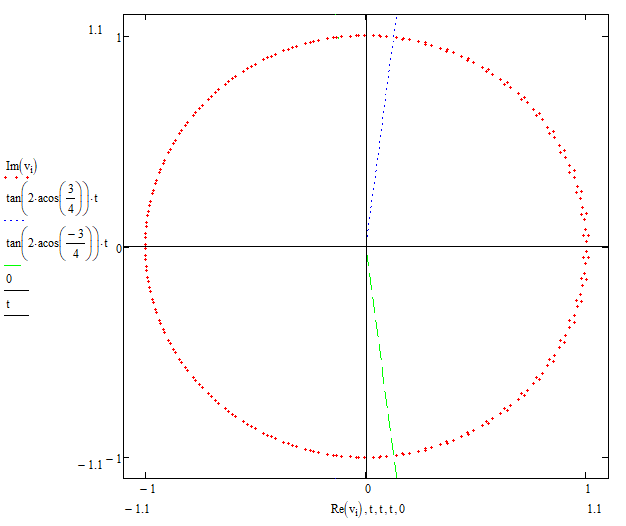}
\caption{Estimation of the number of nonunimodular roots in different sectors of $[0,2\pi]$}
\label{fig:2}       
\end{figure}

If we replace $y$ with $x^{60}$ in \eqref{P23inv} 
we get
$$P^\times_{2,3}(x,x^{60})=x^{242}+x^{182}+x^{181}+x^{121}+x^{61}+x^{60}+1$$
having 28 roots $>1$ (and 28 roots $<1$) in modulus (see Figure \ref{fig:2}) so that $C(P(x,x^{60}))\approx\frac{2\cdot 28}{242}\approx0.2314$ that is close to the exact value $0.230053456\ldots$.


\subsection{The exact values}


If we calculate the exact values of jump points $t_1,t_2,\ldots,t_{m-1}$, defined in the MBM method, we can determine $LC$ exactly using \eqref{APP3}.  
The jump points occur at points for which $P(x, y) = 0$ on the torus $\mathbb{T}^2$. 
\begin{lemma}
For reciprocal $P(x, y)$
with real coefficients, the jump points occur at values of $|x| = 1$ that
are roots of the discriminant polynomial $F(x) = \textrm{disc}_y P(x, y)$. 
\end{lemma}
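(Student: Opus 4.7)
The plan is to trace the $y$-roots of $P(x,y)$ as $x$ moves along the unit circle and to show that any crossing of $|y|=1$ forces two branches to collide, which places the event on the discriminant locus $F(x)=0$. A jump point of $\nu(e^{2\pi i t})$ is by definition a value of $t$ at which the count of $y$-roots with $|y|>1$ changes, and this can only happen when some root passes through $|y|=1$, so it suffices to produce a multiple $y$-root at every such passage.

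The first step is to manufacture a natural involution on the $y$-roots for $|x|=1$, using both the reciprocal hypothesis and the reality of the coefficients. From $P(x,y)=x^{D_x}y^{D_y}P(1/x,1/y)$ the equation $P(x_0,y_0)=0$ implies $P(1/x_0,1/y_0)=0$; complex conjugation (legitimate because the coefficients are real) then gives $P(1/\bar x_0,1/\bar y_0)=0$, and for $|x_0|=1$ this reduces to $P(x_0,1/\bar y_0)=0$. Hence the map $\sigma\colon y\mapsto 1/\bar y$ permutes the $y$-roots of $P(x_0,\cdot)$, and its fixed points are exactly the unimodular ones.

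Next I would exploit this pairing. Off the discriminant locus the roots can be labelled as continuous (in fact analytic) functions $y_j(t)$ of the arc parameter, and $\sigma$ sends $y_j(t)$ to some $y_{k(j)}(t)$ with $k(j)$ locally constant. If $k(j)\ne j$, the two branches coincide precisely when $y_j(t_0)=1/\overline{y_j(t_0)}$, i.e.\ when $|y_j(t_0)|=1$, and at such an instant $y_j(t_0)=y_{k(j)}(t_0)$ is a multiple root, forcing $F(e^{2\pi i t_0})=0$. If instead $k(j)=j$ on an open arc, then $y_j(t)$ stays on the unit circle throughout that arc and contributes no change to $\nu$. In either case, an honest jump of $\nu$ can only occur at a $t$ for which $F(e^{2\pi i t})=0$.

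The main obstacle I expect is the bookkeeping near the discriminant zeros themselves, where the local analyticity breaks down and several branches may merge simultaneously, or a self-paired branch may split off the unit circle into a pair $(y,1/\bar y)$. I would handle this by working on small arcs on which the labelling is consistent, verifying that the two scenarios above are exhaustive there, and then noting that the lemma only asserts the inclusion of the jump set into $\{|x|=1 : F(x)=0\}$, so no finer information about the behaviour at a discriminant zero is needed to conclude.
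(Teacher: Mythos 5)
Your proposal is correct and follows essentially the same route as the paper: both construct the involution $y\mapsto 1/\bar y$ on the $y$-roots for $|x|=1$ from reciprocality together with real coefficients, and both conclude that a root crossing $|y|=1$ collides with its partner, producing a double root and hence a zero of $\mathrm{disc}_y P$. Your extra care with the self-paired case $k(j)=j$ is a minor refinement of the paper's assertion that $1/\overline{y_k}$ is a \emph{different} root near the jump, which there is automatic since $|y_k|>1$ on one side.
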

\begin{proof}
Notice that if $P(e(t), y) = 0$, then also $P(e(-t), \bar{y}) =0$, by complex conjugation, and then $P(e(t), 1/\bar{y}) = 0$,
since $P$ is reciprocal, here $e(t)$ denotes $\exp(2\pi i t)$. A jump point of $\nu(e(t))$ is a value $t = t_j$ such that $|y_k(e(t))| > 1$ for
$t_j<t<t_j + \epsilon$ or $t_j - \epsilon <t<t_j$ and $\lim_{ t \to t_j} |y_k(e(t))| = 1$.
But $1/\overline{y_k(e(t))}$ is a different root of $P(e(t), y)$ for which
$\lim_{ t\to t_j}\left|1/{\overline{y_k(e(t))}}\right| = 1$, hence $P(e(t_j), y) = 0$ 
has a double root and so $\textrm{disc}_y P(e(t_j), y) = 0$.
\end{proof}
For example $\textrm{disc}_y P^\times_{2,3}(x,y)=(4x^2-x+4)(x^2+6x+1)^2$ has unimodular roots $(1\pm 3i\sqrt{7})/8$ so that 
$LC(P^\times_{2,3}(x,y))=\frac{\arctan(3\sqrt{7})}{2\pi}=\frac{\arccos(3/4)}{\pi}$ i.e. the same result claimed in Theorem \ref{MainTh}.

In the second example $\textrm{disc}_y P^\times_{3,1}(x,y)=12x^8+52x^7+60x^6-36x^5+13x^4-36x^3+60x^2+52x+12$ has four unimodular roots so that 
$LC(P^\times_{3,1}(x,y))=$ 

\noindent
$\frac{1}{2\pi}\left(\arctan\left(\frac{0.49793075196076851743}{0.86721679310987954325}\right)+\arctan\left(\frac{0.97538685144561600137}{-0.22050054427825755854}\right)+\pi\right)=$ $0.3683378552178548$ 
is the value presented in 11th row of Table \ref{tab:table2} that we calculated using the MBM method.

In the third example $\textrm{disc}_y P^\times_{3,2}(x,y)=81x^6+66x^5+1007x^4+1788x^3+1007x^2+66x+81$ has two unimodular roots so that 
$LC(P^\times_{3,2}(x,y))=$ 

\noindent
$\frac{4}{5\cdot2\pi}\left(\arctan\left(\frac{0.41804296711468602894}{-0.90842725501052066099}\right)+\pi\right)=$ $0.34508645923655007$ 
is the value presented in 9th row of Table \ref{tab:table2} that we calculated using the MBM method.

\bibliographystyle{crplain}
\bibliography{StankovBib1}{}

\end{document}